\newtheorem{theorem}{Theorem}[section]
\newtheorem{proposition}[theorem]{Proposition}
\newtheorem{lemma}[theorem]{Lemma}
\theoremstyle{definition}
\newtheorem{definition}[theorem]{Definition}
\numberwithin{equation}{section}
\begin{document}

\baselineskip=15pt

\title[Branched holomorphic Cartan geometry on Sasakian manifolds]{Branched holomorphic Cartan 
geometry on Sasakian manifolds}

\author[I. Biswas]{Indranil Biswas}

\address{School of Mathematics, Tata Institute of Fundamental
Research, Homi Bhabha Road, Mumbai 400005, India}

\email{indranil@math.tifr.res.in}

\author[S. Dumitrescu]{Sorin Dumitrescu}

\address{Universit\'e C\^ote d'Azur, CNRS, LJAD, France}

\email{dumitres@unice.fr}

\author[G. Schumacher]{Georg Schumacher}

\address{Fachbereich Mathematik und Informatik,
Philipps-Universit\"at Marburg, Lahnberge, Hans-Meerwein-Strasse, D-35032
Marburg, Germany}

\email{schumac@mathematik.uni-marburg.de}

\subjclass[2010]{53C25, 14F05, 51P05, 53C56}

\keywords{Sasakian manifold, branched Cartan geometry, Calabi-Yau manifold, Atiyah bundle,
connection.}

\date{}

\begin{abstract}
We extend the notion of (branched) holomorphic Cartan geometry on a complex manifold to the
context of Sasakian manifolds. Branched holomorphic Cartan geometries on Sasakian Calabi-Yau
manifolds are
investigated.
\end{abstract}

\maketitle

\section{Introduction}

Contact manifolds are the odd dimensional counterparts of the symplectic manifolds. Just as 
the total space of the cotangent bundle of a $C^\infty$ manifold $M$ is a typical (local) model 
of a symplectic manifold, the total space of the projective bundle $P(T^*M)$ is a typical 
(local) model of a contact manifold. In a similar vein, Sasakian manifolds are the odd 
dimensional counterparts of the K\"ahler manifolds. A compact regular Sasakian manifold is the 
unit circle bundle inside a holomorphic Hermitian line bundle of positive curvature on a complex 
projective manifold. More generally, a compact quasi-regular Sasakian manifold is the unit circle 
bundle inside a holomorphic Hermitian line bundle of positive curvature on a K\"ahler orbifold. 
The global structure of compact irregular Sasakian manifolds does not admit
any such explicit description. 
Sasakian manifolds were introduced by Sasaki \cite{Sa}, \cite{SH}, which explains the 
terminology. In last fifteen years, there has been a substantial increase of the interest in 
Sasakian manifolds (see \cite{BG2} and references therein). It is evident from the references 
that a very large part of this recent investigations into Sasakian manifolds were carried out by 
C. Boyer and K. Galicki. Another aspect of this recent activities in Sasakian manifolds is the 
discovery of their relevance in string theory. This was initiated in the works of J. Maldacena 
\cite{Mal}. For further developments in this direction see \cite{GMSW}, \cite{MSY}, \cite{MS} and 
references therein.

Let $G$ be a complex Lie group and $H\, \subset\, G$ a complex Lie subgroup. A holomorphic 
Cartan geometry of type $(G,\, H)$ is a complex manifold equipped with an infinitesimal 
structure that mimics the infinitesimal structure of the quotient manifold $G/H$. In particular, 
a flat Cartan geometry of type $(G,\, H)$ is a complex manifold equipped with local charts 
modeled on open subsets of $G/H$ such that all the transition functions are given by the 
elements of $G$ acting on $G/H$ as left--multiplication. See \cite{Sh} and references therein 
for Cartan geometry. It should be mentioned that Cartan geometry has close ties with theoretical 
physics; see \cite{RS}, \cite{Ha}, \cite{AFL}, \cite{Ho} and references therein. Being motivated 
by the works of Mandelbaum \cite{Ma1}, \cite{Ma2}, the notion of a holomorphic Cartan geometry 
was enhanced to branched holomorphic Cartan geometry; this was done in \cite{BD}.

Our aim here is to develop an analog of holomorphic Cartan geometries and of branched 
holomorphic Cartan geometries in the context of Sasakian manifolds. Most of our efforts were 
spent in building the foundations. In the last section we investigate holomorphic Cartan 
geometries on compact quasi-regular Sasakian manifolds that are Calabi-Yau. We hope in future to 
investigate further this topic of holomorphic Cartan geometries on Sasakian manifolds.

\section{Holomorphic principal bundles on Sasakian manifolds}

\subsection{Sasakian manifolds}

Let $X$ be a connected oriented smooth real manifold of dimension
$2m+1$, where $m$ is a positive integer, and let $g$ be a $C^\infty$ Riemannian
metric on $X$. The Levi-Civita connection on the real tangent bundle $TX$ associated to $g$
will be denoted by $\nabla$.

\begin{definition}[{\cite[Definition-Theorem 10]{BGsusy}}]\label{de:sasaki}
The above Riemannian manifold $(X,\, g)$ is called a \textit{Sasakian manifold} if any of the
following three equivalent conditions hold:
\begin{enumerate}
\item[(i)] There is a Killing vector field $\xi$ on $X$ of length one such that the section
\begin{equation}\label{Phi}
\Phi \, \in\, C^\infty(X,\, TX\otimes (TX)^*)
\end{equation}
defined by $\Phi (v) \,=\, -\nabla_v\xi$ satisfies the identity
\begin{equation}\label{id.}
(\nabla_v \Phi) (w)\, =\, g(v\, ,w)\xi- g(\xi ,\, w)v
\end{equation}
for all $v\, ,w\,\in\, T_xX$ and all $x\,\in\, X$.

\item[(ii)] There is a Killing vector field $\xi$ on $X$ of unit length such
that the Riemann curvature tensor $R$ of $(X,\, g)$ satisfies the identity
$$
R(v,\, \xi)w \, =\, g(\xi,\, w)v- g(v,\, w)\xi
$$
for all $v$ and $w$ as above.

\item[(iii)] The metric cone $({\mathbb R}_+\times X,\, dr^2 \oplus r^2\cdot g)$ is K\"ahler.
\end{enumerate}
\end{definition}

Given a Killing vector field $\xi$ of
unit length satisfying condition (i) in Definition \ref{de:sasaki}, the K\"ahler structure on
${\mathbb R}_+\times X$ asserted in statement (iii) is constructed as follows. Let $F$ be
the distribution on $X$ of rank $2m$ given by the orthogonal complement $\xi^\perp$ of
$\xi$. The homomorphism $\Phi$ (defined in \eqref{Phi}) preserves $F$, and
furthermore,
\begin{equation}\label{e0}
(\Phi\vert_F)^2 \, =\, -\text{Id}_F\, .
\end{equation}
Let $\widetilde{J}$ be the almost complex structure on ${\mathbb R}_+
\times X$ defined as follows:
$$
\widetilde{J}\vert_F\,=\, \Phi\vert_F\, ,\ \
\widetilde{J}\left(\frac{d}{dr}\right) \,=\, \xi\, ,\ \
\widetilde{J}(\xi) \,=\, -\frac{d}{dr}\, .
$$
This almost complex structure is in fact integrable. The Riemannian metric
$dr^2 \oplus r^2\cdot g$ on ${\mathbb R}_+\times X$ is K\"ahler with respect
to this complex structure $\widetilde{J}$ \cite{BGsusy}, \cite{BG2}.

Conversely, if the metric cone $({\mathbb R}_+\times X,\, dr^2 \oplus r^2\cdot g)$
is K\"ahler, then consider the vector field on ${\mathbb R}_+\times X$ given by
$J(\frac{d}{dr})$, where $J$ is the almost complex
structure on ${\mathbb R}_+\times X$. The vector field $\xi$ on $X$ obtained by
restricting this vector field to $X\, =\, \{1\}\times X\, \subset\, {\mathbb R}_+\times X$
satisfies both condition $(i)$ and $(ii)$ in
Definition~\ref{de:sasaki}, with respect to the induced Riemannian metric $g$.

As the three conditions in Definition \ref{de:sasaki} are equivalent,
the vector field $\xi$ and the K\"ahler structure on ${\mathbb R}_+\times X$ will be considered 
as part of the definition of a Sasakian manifold.

A connected Sasakian manifold $(X,\, g,\, \xi)$ with $X$ a compact
manifold is called \textit{quasi-regular} if all 
the orbits of the unit vector field $\xi$ are closed. It $(X,\, g,\, \xi)$ is not quasi-regular, 
then it is called an \textit{irregular} Sasakian manifold. A quasi-regular connected 
Sasakian manifold $(X,\, g,\, \xi)$ is called \textit{regular} if the vector field $\xi$
integrates into a free and faithful action of $S^1\, =\, {\rm U}(1)$ on $X$.

We refer the reader to \cite{BG2} for Sasakian manifolds.

\subsection{Smooth principal bundles}

Let $M$ be a $C^\infty$ manifold equipped with a $C^\infty$ distribution $S\, \subset\, TM$ of
rank $r$. Let $H$ be a Lie group and $p\, :\, E_H\, \longrightarrow\, M$ a $C^\infty$ principal
$H$--bundle on $M$. A {\it partial connection} on $E_H$ in the direction of $S$ is a
$C^\infty$ distribution
$$
\widetilde{S}\, \subset\, TE_H
$$
of rank $r$ such that
\begin{enumerate}
\item $\widetilde{S}$ is preserved by the action of $H$ on $E_H$, and

\item the differential $dp\, :\, TE_H\, \longrightarrow\, p^*TM$ of $p$ restricts to an isomorphism
between $\widetilde{S}$ and the subbundle $p^*S\, \subset\, p^*TM$.
\end{enumerate}
A partial connection $\widetilde{S}$ is called \textit{integrable} if the distribution
$\widetilde{S}$ is integrable. Note that if $\widetilde{S}$ is integrable, then $S$ must also
be integrable.

If $S\,=\, TM$, then a partial connection is a usual connection.

In general, $S$ can be a complex distribution, meaning a subbundle of the complex
vector bundle $TM\otimes_{\mathbb R}\mathbb C$. Note that the Lie bracket operation on the
locally defined vector fields on $M$ extends to a Lie bracket operation on the locally
defined smooth sections of $TM\otimes_{\mathbb R}\mathbb C$. The notion of a partial connection
extends to complex distributions in an obvious way.

Let $\rho\,:\, H\, \longrightarrow\, Q$ be a homomorphism of Lie groups. Let
$$
E_Q\, :=\, E_H\times^\rho Q\, \longrightarrow\, M
$$
be the principal $Q$--bundle over $X$ obtained by extending the structure group of $E_H$
using the above homomorphism $\rho$. A partial connection $\widetilde S$ on $E_H$ produces
a partial connection on $E_Q$ for the same distribution $S$
on $M$. To see this, we recall that $E_Q$
is the quotient of $E_H\times Q$ where two points $(e_1,\, q_1)$ and $(e_2,\, q_2)$ of
$E_H\times Q$ are identified if there is an element $h\, \in\, H$ such that $e_2\,=\, e_1h$
and $q_2\,=\, \rho(h^{-1})q_1$. Let ${\widetilde S}'$ be the distribution of rank $r$ on
$E_H\times Q$ given by the distribution $\widetilde S$ on $E_H$. More precisely,
for any $(e_1,\, q_1)\, \in\, E_H\times Q$, the subspace
$$
{\widetilde S}'(e_1,\, q_1)\, \subset\, T_{(e_1,\, q_1)}E_H\times Q\,=\,
T_{e_1}E_H \oplus T_{q_1}Q
$$
is ${\widetilde S}(e_1)\, \subset\, T_{e_1}E_H$. This distribution ${\widetilde S}'$ descends
to a distribution on the quotient space $E_Q$ of $E_H\times Q$ by the quotient map. The
resulting distribution on $E_Q$ is in fact a partial connection on $E_Q$ for the distribution $S$.

Let ${\mathcal X}\, :=\, (X,\, g,\, \xi)$ be a connected Sasakian manifold. A
principal $H$--bundle on $\mathcal X$ is defined to be a $C^\infty$ principal $H$--bundle
on $X$ equipped with a partial connection $\widetilde{\xi}$ for the one dimensional distribution
${\mathbb R}\cdot\xi$ on $X$.

For a principal bundle on $X$ with structure group a complex Lie group $H$, it is always
assumed that the fibers of the principal
bundle are complex manifolds and the action of $H$ on the principal bundle preserves the
complex structure of the fibers of the principal bundle. To explain this condition, 
let $H$ be a complex Lie group. Let $p\, :\, E_H\, \longrightarrow\, X$ be
principal $H$--bundle. This implies that
\begin{itemize}
\item the subbundle ${\rm kernel}(dp)\, \subset\, TE_H$
is equipped with a $C^\infty$ automorphism $J_E\, :\, {\rm kernel}(dp)\,\longrightarrow\,
{\rm kernel}(dp)$ such that $J_E\circ J_E\,=\, -\text{Id}$,

\item{} for every $x\, \in\, X$, the
almost complex structure $J_E\vert_{p^{-1}(x)}$ on the fiber $p^{-1}(x)$ is integrable, and

\item the action of $H$ on $p^{-1}(x)$ is holomorphic for every $x\, \in\, X$.
\end{itemize}
Consider the differential $dp\, :\, TE_H\, \longrightarrow\, p^*TX$ of the projection $p$.
Using $J_E$ we get a decomposition of the complex vector bundle
$${\mathbb K}\, :=\, {\rm kernel}(dp)\otimes_{\mathbb R} {\mathbb C}\, \longrightarrow\, X$$
as follows. Define
\begin{equation}\label{f01}
{\mathbb K}^{0,1}\,:=\, \{v+\sqrt{-1}\cdot J_E(v)\, \mid\, v\, \in\, {\rm kernel}(dp)\}
\, \subset\, {\mathbb K}\, :=\, {\rm kernel}(dp)\otimes_{\mathbb R} {\mathbb C}
\end{equation}
and
\begin{equation}\label{f02}
{\mathbb K}^{1,0}\,:=\, \{v-\sqrt{-1}\cdot J_E(v)\, \mid\, v\, \in\, {\rm kernel}(dp)\}
\, \subset\, {\mathbb K}\, ,
\end{equation}
so we have ${\mathbb K}\,=\, {\mathbb K}^{0,1}\oplus {\mathbb K}^{1,0}$ and
${\mathbb K}^{1,0}\,=\, \overline{{\mathbb K}^{0,1}}$.

Let ${\mathcal X}\, :=\, (X,\, g,\, \xi)$ be a connected Sasakian manifold.
Let $(E_H,\, p,\, J_E)$ be a principal $H$--bundle on $X$, where $H$ is a complex
Lie group. Let $\widetilde{\xi}\, \subset\, TE_H$ be a partial connection 
on $E_H$ for the one dimensional distribution
${\mathbb R}\cdot\xi$ on $X$. Consider the (unique) vector field $\widehat{\xi}$ on $E_H$
such that
\begin{itemize}
\item $\widehat{\xi}(z) \, \subset\, \widetilde{\xi}_z$ for all $z\, \in\, E_H$, and also

\item $dp (\widehat{\xi}(z))\,=\, \xi(p(z))$.
\end{itemize}
Note that the flow on $E_H$ associated to the above vector field $\widehat\xi$ takes a fiber
of $p$ to another fiber of $p$. Therefore, the Lie derivative
$L_{\widetilde{\xi}} J_E$ is a $C^\infty$ endomorphism of ${\rm kernel}(dp)$ that anti-commutes
with $J_E$, meaning $J_E\circ (L_{\widetilde{\xi}} J_E)+(L_{\widetilde{\xi}} J_E)\circ J_E\,=\, 0$.

A complex principal $H$--bundle on a Sasakian manifold ${\mathcal X}\, :=\, (X,\, g,\, \xi)$ is
a principal $H$--bundle $(E_H,\, p,\, J_E)$ on $X$, equipped with a
partial connection $\widetilde{\xi}$ for the one dimensional distribution
${\mathbb R}\cdot\xi$ on $X$, such that
$$
L_{\widetilde{\xi}} J_E\,=\, 0\, .
$$

\subsection{Holomorphic principal bundles}

For a connected Sasakian manifold ${\mathcal X}\, :=\, (X,\, g,\, \xi)$ of
dimension $2m+1$, consider
$F$ equipped with the almost complex structure $\Phi\vert_F$ (see \eqref{e0}). Define
\begin{equation}\label{e1}
F^{0,1}\, :=\, \{v+\sqrt{-1}\cdot \Phi(v)\, \mid\, v\, \in\, F\}\, \subset\, F\otimes_{\mathbb R}
\mathbb C
\end{equation}
and
\begin{equation}\label{e2}
F^{1,0}\, :=\, \{v-\sqrt{-1}\cdot \Phi(v)\, \mid\, v\, \in\, F\}\, \subset\, F\otimes_{\mathbb R}
\mathbb C\, ,
\end{equation}
so $F\otimes_{\mathbb R}{\mathbb C}\,=\, F^{1,0}\oplus F^{0,1}$ and
$F^{1,0}\,=\, \overline{F^{0,1}}$. We note that $F^{1,0}$ is a holomorphic Hermitian vector bundle
over $\mathcal X$ (see \cite[p.~551, Definition 3.3]{BS} and \cite[p.~551, Definition 3.5]{BS}
for definition of holomorphic Hermitian bundles over a Sasakian manifold). The partial connection
on $F^{1,0}$ defining its holomorphic structure is given by the Lie bracket operation of vector
fields; the Hermitian structure on $F^{1,0}$ is given by the Riemannian metric $g$ on $X$.

The complex distribution on $X$
\begin{equation}\label{e3}
{\mathcal F}^{0,1} \,:=\, F^{0,1}\oplus {\mathbb C}\cdot \xi \, \subset\,
TX\otimes_{\mathbb R} \mathbb C
\end{equation}
of complex dimension $m+1$ is integrable \cite[p.~550, Lemma 3.4]{BS}.

Let $H$ be a complex Lie group.
Let $p\, :\, E_H\, \longrightarrow\, X$ be a principal $H$--bundle on $X$;
as before, the almost complex structure on the fibers of $E_H$ will be denoted by
$J_E$. Let
$\widetilde{\xi}\, \subset\, TE_H$ be a distribution such that 
$(E_H,\, p,\, J_E,\, \widetilde{\xi})$ is a complex principal $H$--bundle on ${\mathcal X}$. A {\it
holomorphic structure} on $(E_H,\, p,\, J_E,\, \widetilde{\xi})$ is a complex distribution
$$
\widetilde{\mathcal F}\, \subset\, TE_H\otimes_{\mathbb R}{\mathbb C}
$$
of complex dimension $m+1$ satisfying the following five conditions:
\begin{enumerate}
\item the distribution $\widetilde{\mathcal F}$ is integrable,

\item $\widetilde{\xi}\, \subset\, \widetilde{\mathcal F}$,

\item the complexified differential $$dp\otimes_{\mathbb R}{\mathbb C}\, :\, TE_H\otimes_{\mathbb R}{\mathbb C}
\,\longrightarrow\, p^* TX\otimes_{\mathbb R}{\mathbb C}$$ of the projection $p$
restricts to an isomorphism between $\widetilde{\mathcal F}$ and $p^*{\mathcal F}^{0,1}$
(defined in \eqref{e3}),

\item the action of $H$ on $TE_H\otimes_{\mathbb R}{\mathbb C}$, given by the action of $H$ on $E_H$,
preserves the subbundle $\widetilde{\mathcal F}$, and

\item the complex distribution $\widetilde{\mathcal F}\oplus {\mathbb K}^{0,1}$
(see \eqref{f01}) on $E_H$ is integrable.
\end{enumerate}
In particular, $\widetilde{\mathcal F}$ is a partial connection on $E_H$ for the
complex distribution ${\mathcal F}^{0,1}$ on $X$.

A holomorphic principal $H$--bundle on $\mathcal X$ is a complex principal $H$--bundle on
${\mathcal X}$ equipped with a holomorphic structure.

Let
\begin{equation}\label{rho}
\rho\,:\, H\, \longrightarrow\, Q
\end{equation}
be a holomorphic homomorphism of complex
Lie groups. Take a holomorphic
principal $H$--bundle
$(E_H,\, p,\, J_E,\, \widetilde{\xi},\, \widetilde{\mathcal F})$
on $\mathcal X$. Let
$$
E_Q\, :=\, E_H\times^\rho Q\, \stackrel{p'}{\longrightarrow}\, X
$$
be the principal $Q$--bundle over $X$ obtained by extending the structure group of $E_H$
using the above homomorphism $\rho$. The complex structure $J_E$ on the fibers of
$E_H$ and the complex structure of $Q$ together produce a complex structure $J_{E_Q}$ on the
fibers of $E_Q$. As noted before, a partial connection on $E_H$ produces a partial connection
on $E_Q$. Therefore, the partial connections $\widetilde{\xi}$ and
$\widetilde{\mathcal F}$ produce partial connections on $E_Q$ for the distributions $\xi$ and
${\mathcal F}^{0,1}$ respectively. Let $\widetilde{\xi}_Q$ and $\widetilde{\mathcal F}_Q$ 
denote these partial connections on $E_Q$ for the distributions $\xi$ and
${\mathcal F}^{0,1}$ respectively. Now it is straight-forward to check that
\begin{equation}\label{r2}
(E_Q,\, p',\, J_{E_Q},\, \widetilde{\xi}_Q,\, \widetilde{\mathcal F}_Q)
\end{equation}
is a holomorphic principal $Q$--bundle on $\mathcal X$.

Set $H\,=\, {\rm GL}(r, {\mathbb C})$. Take a holomorphic
principal ${\rm GL}(r, {\mathbb C})$--bundle
$(E,\, p,\, J_E,\, \widetilde{\xi},\, \widetilde{\mathcal F})$ 
on $\mathcal X$. Then the vector bundle $E\times^{{\rm GL}(r, {\mathbb C})} {\mathbb C}^r$
associated to $E$ for the standard representation of ${\rm GL}(r, {\mathbb C})$
on ${\mathbb C}^r$ is a
holomorphic vector bundle over $\mathcal X$ (see \cite{BS} for
holomorphic vector bundles on Sasakian manifolds). Conversely, if $V$ is a holomorphic
vector bundle on $\mathcal X$ of rank $r$, then the frame bundle for $V$ is a holomorphic
principal ${\rm GL}(r, {\mathbb C})$--bundle over $\mathcal X$.

For a general complex Lie group $H$, assume that $Q$ in \eqref{rho} is ${\rm GL}(r, {\mathbb 
C})$. Consequently, $(E_Q,\, p,\, J_{E_Q},\, \widetilde{\xi}_Q,\, \widetilde{\mathcal F}_Q)$ in 
\eqref{r2} produces a holomorphic vector bundle on $\mathcal X$ of rank $r$.

Now take $\rho$ to be the adjoint representation of $H$ in its Lie algebra $\text{Lie}(H)$. Then 
the corresponding holomorphic vector bundle $(E_Q,\, p',\, J_{E_Q},\, \widetilde{\xi}_Q,\, 
\widetilde{\mathcal F}_Q)$ on $\mathcal X$ will be called the adjoint bundle for $E_H$, and it 
will be denoted by $\text{ad}(E_H)$. We note that
\begin{equation}\label{ad}
\text{ad}(E_H)\,=\, {\mathbb K}^{1,0}/H\, ,
\end{equation}
where ${\mathbb K}^{1,0}$ is constructed in \eqref{f02}; the isomorphism in \eqref{ad} is
obtained from the action of $H$ on $E_H$ that identifies ${\mathbb K}^{1,0}$ with the trivial
vector bundle on $E_H$ with fiber $\text{Lie}(H)$.

Given holomorphic vector bundles $W_1$ and $W_2$ on $\mathcal X$, 
holomorphic homomorphisms from $W_1$ to $W_2$ are defined in \cite[p.~551, Definition 3.4]{BS}.
We recall that a holomorphic homomorphism is a fiber-wise $\mathbb C$--linear $C^\infty$ map
\begin{equation}\label{Psi}
\Psi\, :\, W_1\, \longrightarrow\, W_2
\end{equation}
such that $\Psi$ intertwines the partial connections on $W_1$ and $W_2$ in the direction
of the distribution ${\mathcal F}^{0,1}$ defined in \eqref{e3}.

\section{Holomorphic connections on a principal bundle}

As before, ${\mathcal X}\, :=\, (X,\, g,\, \xi)$ is a connected Sasakian manifold of 
dimension $2m+1$. Consider the complex vector bundle $F^{1,0}\, \longrightarrow\, X$ in 
\eqref{e2}; the complex structure on the fibers of it is given by
$\Phi\vert_F$ in \eqref{e0}. As noted before, $F^{1,0}$ is 
a holomorphic vector bundle on $\mathcal X$ of complex rank $m$; the partial connections
are given by Lie derivative.

Let $(E_H,\, p,\, J_E,\, \widetilde{\xi},\, \widetilde{\mathcal F})$ be a holomorphic
principal $H$--bundle on $\mathcal X$. Consider the subbundle
$F^{0,1}\, \subset\, {\mathcal F}^{0,1}$ in \eqref{e3}. Recall that the complexified differential
$dp\otimes_{\mathbb R}{\mathbb C}\, :\, TE_H\otimes_{\mathbb R}{\mathbb C}
\,\longrightarrow\, p^*TX\otimes_{\mathbb R}{\mathbb C}$ of the projection $p$
restricts to an isomorphism between $\widetilde{\mathcal F}$ and $p^*{\mathcal F}^{0,1}$.
Let $${\mathbb F}\, \subset\, \widetilde{\mathcal F}$$
be the subbundle corresponding to
$p^*F^{0,1}$ under this isomorphism between $\widetilde{\mathcal F}$ and $p^*{
\mathcal F}^{0,1}$ given by $dp\otimes_{\mathbb R}{\mathbb C}$. Now define the complex distribution
\begin{equation}\label{bt}
{\mathbb T}\, :=\, 
{\mathbb K}^{1,0}\oplus \overline{\mathbb F}\, \subset\, TE_H\otimes_{\mathbb R}{\mathbb C}
\end{equation}
on $E_H$, where ${\mathbb K}^{1,0}$ is defined in \eqref{f02}. Since the complexified differential 
$dp\otimes_{\mathbb R}{\mathbb C}$ gives an isomorphism between ${\mathbb F}$ and $p^*F^{0,1}$, 
it also gives an isomorphism between $\overline{\mathbb F}$ and $p^*F^{1,0}$ (defined in 
\eqref{e2}). Note that the action of $H$
on $TE_H\otimes_{\mathbb R}{\mathbb C}$, given by the action of $H$ on $E_H$, preserves this
subbundle $\mathbb T$ in \eqref{bt}. The quotient
\begin{equation}\label{at}
\text{At}(E_H)\, :=\, {\mathbb T}/H \, \longrightarrow\, E_H/H \,=\, X
\end{equation}
is a complex vector bundle. Although the complex distribution $\mathbb T$ is not integrable,
it is straightforward to check that the complex distribution ${\mathbb T}\oplus {\mathbb C}\cdot
\widetilde{\xi}$ on $E_H$ is integrable. Using this it follows that $\text{At}(E_H)$ is a
holomorphic vector bundle over the Sasakian manifold $\mathcal X$. This $\text{At}(E_H)$ will
be called the \textit{Atiyah bundle} for $E_H$. In view of \eqref{bt}, comparing \eqref{ad} and
\eqref{at} it follows that
\begin{equation}\label{ad2}
\text{ad}(E_H)\, \subset\, \text{At}(E_H)\, ;
\end{equation}
this inclusion of $\text{ad}(E_H)$ in $\text{At}(E_H)$ is holomorphic. We shall now investigate 
the quotient vector bundle $\text{At}(E_H)/\text{ad}(E_H)$. 

The isomorphism $(dp\otimes_{\mathbb R}{\mathbb C})\vert_{\overline{\mathbb F}} $ between $\overline{\mathbb F}$ and $p^*F^{1,0}$ and the zero homomorphism ${\mathbb K}^{1,0}\, \longrightarrow\, 
p^*F^{1,0}$ together produce a homomorphism
$$
0\oplus (dp\otimes_{\mathbb R}{\mathbb C})\vert_{\overline{\mathbb F}}\,:\,
{\mathbb T}\, :=\,
{\mathbb K}^{1,0}\oplus \overline{\mathbb F} \, \longrightarrow\, p^*F^{1,0}\, .
$$
The above homomorphism $0\oplus (dp\otimes_{\mathbb R}{\mathbb C})\vert_{\overline{\mathbb F}}$
is $H$--equivariant, and hence it descends to a homomorphism
$$
\text{At}(E_H)\, :=\, {\mathbb T}/H \, \stackrel{d'p}{\longrightarrow}\, (p^*F^{1,0})/H\,=\,
F^{1,0}\, .
$$
This descended homomorphism $d'p$ is evidently surjective, and for the
holomorphic subbundle $\text{ad}(E_H)$ in \eqref{ad2} we have
$$
d'p(\text{ad}(E_H))\,=\, 0\, .
$$
We have the following short exact sequence of holomorphic vector bundles on
$\mathcal X$:
\begin{equation}\label{at2}
0\, \longrightarrow\, \text{ad}(E_H) \, \longrightarrow\, \text{At}(E_H)
\,\stackrel{d'p}{\longrightarrow} \, F^{1,0} \, \longrightarrow\, 0\, ;
\end{equation}
holomorphic homomorphisms of holomorphic vector bundles on $\mathcal X$ are defined
in \eqref{Psi}.
The short exact sequence in \eqref{at2} will be called the \textit{Atiyah exact sequence} for $E_H$.

A \textit{holomorphic connection} on $E_H$ is a holomorphic homomorphism of vector bundles
on $\mathcal X$
$$
D\, :\, F^{1,0}\, \longrightarrow\,\text{At}(E_H)
$$
such that $(d'p) \circ D\,=\, \text{Id}_{F^{1,0}}$, where $d'p$ is the projection
in \eqref{at2}.

The above definition of a holomorphic connection on $E_H$ is modeled on the definition of
a holomorphic connection on a holomorphic principal bundle over a complex manifold (see \cite{At}).

Using the orthogonal splitting of the real tangent bundle
$$
TX\,=\, {\mathbb R}\cdot\xi \oplus \xi^\perp \,=\, {\mathbb R}\cdot\xi \oplus F\, ,
$$
we consider $F^*$ as a subbundle of the real cotangent bundle of $X$. So
$(F^{1,0})^*$ is a subbundle of $(TX)^*\otimes_{\mathbb R}\mathbb C$.

\begin{lemma}\label{lem1}
Let $D$ be a holomorphic connection on $E_H$. Then $D$ defines a usual connection $\widetilde
D$ on the $C^\infty$ principal
$H$--bundle $E_H$. The curvature of $\widetilde D$ is a $C^\infty$ section of
the vector bundle ${\rm ad}(E_H)\otimes\bigwedge^2 (F^{1,0})^*$.
\end{lemma}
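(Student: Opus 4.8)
The plan is to realize $\widetilde D$ as the principal connection whose complexified horizontal distribution is
$$
\mathcal H_{\mathbb C}\, :=\, {\mathbb C}\cdot\widehat{\xi}\,\oplus\,\mathcal D\,\oplus\,\overline{\mathcal D}\,\subset\, TE_H\otimes_{\mathbb R}{\mathbb C},
$$
where $\mathcal D\,\subset\,{\mathbb T}$ is the $H$--invariant lift of $p^*F^{1,0}$ cut out by the splitting $D$ of \eqref{at2} (so $\mathcal D/H\,=\,\mathrm{image}(D)$), and $\widehat\xi$ is the canonical lift of $\xi$ supplied by the partial connection $\widetilde\xi$. First I would check that $\mathcal H_{\mathbb C}$ is the complexification of a genuine real $H$--invariant horizontal distribution: it is conjugation--stable because $\overline{{\mathbb C}\cdot\widehat\xi}\,=\,{\mathbb C}\cdot\widehat\xi$ and $\overline{\mathcal D}$ is by definition the conjugate of $\mathcal D$; it is $H$--invariant because $\widetilde\xi$ and ${\mathbb T}$ are; and $dp\otimes_{\mathbb R}{\mathbb C}$ carries it isomorphically onto $p^*(TX\otimes_{\mathbb R}{\mathbb C})$, since $\mathcal D\,\mapsto\,p^*F^{1,0}$, $\overline{\mathcal D}\,\mapsto\,p^*F^{1,0}$'s conjugate $p^*F^{0,1}$, and $\widehat\xi\,\mapsto\,\xi$. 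This yields the asserted connection $\widetilde D$; note that $\overline{\mathcal D}$ (and not the tautological lift ${\mathbb F}$ of $F^{0,1}$) is forced by reality.

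For the curvature I would exploit that, as $H$ is a \emph{complex} Lie group, the ${\rm ad}(E_H)\,=\,{\mathbb K}^{1,0}/H$--component $\theta$ of the connection form annihilates both $\mathcal H_{\mathbb C}$ and the anti--holomorphic vertical bundle ${\mathbb K}^{0,1}$; hence for horizontal lifts $\Omega(\widetilde u,\widetilde v)\,=\,-\theta([\widetilde u,\widetilde v])$, and only the ${\mathbb K}^{1,0}$--part of such brackets matters. It then suffices to prove $\iota_w\Omega\,=\,0$ for every $w\,\in\,\mathcal F^{0,1}\,=\,F^{0,1}\oplus{\mathbb C}\cdot\xi$, for this is precisely the statement that $\Omega$ has no leg along $\mathcal F^{0,1}$, i.e. $\Omega\,\in\,\Gamma({\rm ad}(E_H)\otimes\bigwedge^2(F^{1,0})^*)$. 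When both entries lie in $\mathcal F^{0,1}$ this is immediate: the relevant horizontal lifts ($\overline{\mathcal D}$ and $\widehat\xi$) lie in $\overline{\mathbb T}\oplus{\mathbb C}\cdot\widehat\xi\,=\,\widetilde{\mathcal F}\oplus{\mathbb K}^{0,1}$, which is integrable by condition~(5), so the bracket stays in that bundle and is killed by $\theta$.

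The remaining case---one entry in $\mathcal F^{0,1}$, one in $F^{1,0}$---is the crux, and this is where the hypothesis that $D$ is holomorphic enters. I would pass to a local holomorphic trivialization $\sigma$ of $E_H$, which exists because the integrable distribution $\widetilde{\mathcal F}$ defines the holomorphic structure (see \cite{BS}). In the frame $\sigma$ the Atiyah sequence \eqref{at2} splits holomorphically, and the ${\rm ad}(E_H)$--valued $1$--form $\eta\,:=\,\sigma^*\theta$ representing $\widetilde D$ enjoys two properties: it is of type $(1,0)$, because $\sigma$ holomorphic gives $\sigma_*(\mathcal F^{0,1})\,\subset\,\widetilde{\mathcal F}$ while $\theta$ vanishes on $\widehat\xi$ and on ${\mathbb F}$; and it is holomorphic, $\overline\partial\eta\,=\,0$, because $\eta$ is the difference of the two holomorphic splittings $D$ and the frame splitting $D_\sigma$. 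Granting these, the ${\rm ad}(E_H)$--part of the curvature is $d\eta+\tfrac12[\eta,\eta]$. The bracket is manifestly of type $(2,0)$. For $d\eta$ one computes, for $w\,\in\,\mathcal F^{0,1}$ and $U\,\in\,F^{1,0}$, that $d\eta(w,U)\,=\,w(\eta(U))-\eta(\mathrm{pr}_{F^{1,0}}[w,U])\,=\,(\overline\partial_w\eta)(U)\,=\,0$ (this covers the Reeb direction too, as $\xi\,\in\,\mathcal F^{0,1}$), while $d\eta(w,w')\,=\,-\eta(\mathrm{pr}_{F^{1,0}}[w,w'])\,=\,0$ by integrability of $\mathcal F^{0,1}$. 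Hence $\Omega$ is of type $(2,0)$ along $F$ with no Reeb leg, which is the claim.

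I expect the real work to be in establishing that $\eta$ is holomorphic: this is exactly the transcription of ``$D$ is a holomorphic homomorphism'' into the frame, the Sasakian analog of the classical fact that a holomorphic connection has a holomorphic $(1,0)$ connection form. The structural input making only the $(2,0)$ part survive is the vanishing $[{\mathbb K}^{1,0},{\mathbb K}^{0,1}]\,=\,0$, itself a consequence of $H$ being complex, which is what annihilates the mixed term in $[\eta,\eta]$ and reduces the ${\rm ad}(E_H)$--part of $d\omega+\tfrac12[\omega,\omega]$ to $d\eta+\tfrac12[\eta,\eta]$.
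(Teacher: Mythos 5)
Your proposal is correct and follows the same route as the paper: the connection $\widetilde D$ is obtained exactly as you describe, by combining the partial connection in the direction of $F=\xi^{\perp}$ determined by the splitting $D$ of \eqref{at2} with the given partial connection $\widetilde\xi$ in the Reeb direction. For the curvature statement the paper simply declares the type assertion ``evident,'' so your detailed verification (integrability of $\widetilde{\mathcal F}\oplus{\mathbb K}^{0,1}$ for the $\bigwedge^2(\mathcal F^{0,1})^*$--components, and holomorphicity of the difference $\eta$ of the two splittings for the mixed components) is a legitimate filling-in of that step rather than a different argument.
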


\begin{proof}
The homomorphism $D$ gives a partial connection on $E_H$ in the direction of $F\,=\,\xi^{\perp}$.
So $D$ and the given partial connection on $E_H$ in the direction of $\xi$ together
produce a usual connection on $E_H$. The curvature of this connection is evidently a section of
the vector bundle ${\rm ad}(E_H)\otimes\bigwedge^2 (F^{1,0})^*$.
\end{proof}

The Reeb vector field $\xi$ on $X$ defines a flow on $X$ while the vector field $\widetilde \xi$ 
on $E_H$ defines a flow on $E_H$. The projection $p\,:\, E_H\, \longrightarrow \, X$ intertwines 
these two flows. The curvature of the connection $\widetilde D$ in Lemma \ref{lem1} is actually 
preserved by this flow.

\begin{definition}\label{def0}
A holomorphic connection $D$ the a holomorphic principal $H$--bundle $E_H$ on the
Sasakian manifold $\mathcal X$ will be called {\it flat} if the curvature of the 
corresponding usual connection $\widetilde D$ in Lemma \ref{lem1} vanishes identically.
\end{definition}

Let $\rho\, :\, H\, \longrightarrow\, Q$ be a holomorphic homomorphism of complex Lie groups.
Consider the holomorphic principal $Q$--bundle
$(E_Q,\, p',\, J_{E_Q},\, \widetilde{\xi}_Q,\, \widetilde{\mathcal F}_Q)$ on $\mathcal X$
constructed in \eqref{r2} from the holomorphic principal $H$--bundle $E_H$. Let
$$
\rho'\, :\, \text{ad}(E_H)\, \longrightarrow\, \text{ad}(E_Q)
$$
be the homomorphism of holomorphic vector bundles given by the homomorphism of Lie
algebras corresponding to the above homomorphism $\rho$ of Lie groups. Consider
the injective homomorphism of holomorphic vector bundles
$$
\text{ad}(E_H)\, \longrightarrow\, \text{At}(E_H)\oplus \text{ad}(E_Q)\, ,\ \
v\, \longmapsto\, (-v,\, \rho'(v))\, ;
$$
the above inclusion map $\text{ad}(E_H)\, \hookrightarrow\, \text{At}(E_H)$ is the one in
\eqref{at2}. Then we have
$$
\text{At}(E_Q)\,=\, (\text{At}(E_H)\oplus \text{ad}(E_Q))/\text{ad}(E_H)\, .
$$
If $D\, :\, F^{1,0}\, \longrightarrow\,\text{At}(E_H)$ is a holomorphic connection on $E_H$,
then the homomorphism
$$
F^{1,0}\, \longrightarrow\, \text{At}(E_H)\oplus \text{ad}(E_Q)\, , \ \
v\, \longmapsto\, (D(v),\, 0)
$$
descends to a homomorphism $F^{1,0}\, \longrightarrow\, \text{At}(E_Q)$ that defines
a holomorphic connection on the principal $Q$--bundle $E_Q$ over $\mathcal X$; it is
called the holomorphic connection on $E_Q$ induced by $D$.

From Lemma \ref{lem1} we know that the above holomorphic connection on $E_Q$ induced by $D$
defines a usual connection on $E_Q$. This (usual) connection on $E_Q$ clearly coincides with the
connection on $E_Q$ induced by the connection $\widetilde D$ in Lemma \ref{lem1} given by $D$.

\section{Branched holomorphic Cartan geometry}

\subsection{Definitions}

Let $G$ be a connected complex Lie group and $H\, \subset\, G$ a complex Lie subgroup. The Lie 
algebras of $G$ and $H$ will be denoted by $\mathfrak g$ and $\mathfrak h$ respectively. Take 
a holomorphic principal $H$--bundle $${\mathcal E}_H\, =\, (E_H,\, p,\, J_E,\, 
\widetilde{\xi},\, \widetilde{\mathcal F})$$ on $\mathcal X$. Let $${\mathcal E}_G\, =\, 
(E_G,\, p',\, J'_E,\, \widetilde{\xi}',\, \widetilde{\mathcal F}')$$ be the holomorphic 
principal $G$--bundle on $\mathcal X$ obtained by extending the structure group of ${\mathcal 
E}_H$ using the inclusion of $H$ in $G$. The inclusion of $\mathfrak h$ in $\mathfrak g$, being
$H$--equivariant, produces an inclusion of $\text{ad}({\mathcal E}_H)$ in
$\text{ad}({\mathcal E}_H)$; this
inclusion map is holomorphic. We have the following two short exact sequences of holomorphic
vector bundles on $\mathcal X$ with a common first term:
\begin{equation}\label{j1}
0\, \longrightarrow\, \text{ad}({\mathcal E}_H) \,\stackrel{\iota_1}{\longrightarrow}\,
\text{ad}({\mathcal E}_G)
\,\longrightarrow \, \text{ad}({\mathcal E}_G)/\text{ad}({\mathcal E}_H) \, \longrightarrow\, 0
\end{equation}
and
\begin{equation}\label{j2}
0\, \longrightarrow\, \text{ad}({\mathcal E}_H) \, \stackrel{\iota_2}{\longrightarrow}\,
\text{At}({\mathcal E}_H) \,\stackrel{d'p}{\longrightarrow} \, F^{1,0} \, \longrightarrow\, 0
\end{equation}
(see \eqref{at2}).

\begin{definition}\label{def1}
A {\it branched holomorphic Cartan geometry} on $\mathcal X$ of type $(G,\, H)$ is a holomorphic principal
$H$--bundle ${\mathcal E}_H$ on $\mathcal X$ together with a holomorphic homomorphism of 
$$
\varphi\, :\, \text{At}({\mathcal E}_H) \, \longrightarrow\, \text{ad}({\mathcal E}_G)
$$
satisfying the following two conditions:
\begin{enumerate}
\item $\varphi$ is an isomorphism over a nonempty subset of $X$, and

\item for any $v\, \in\, \text{ad}({\mathcal E}_H)$, the equality
$$
\varphi(\iota_2(v))\,=\, \iota_1(v)
$$
holds, where $\iota_1$ and $\iota_2$ are the homomorphisms in \eqref{j1} and \eqref{j2} respectively.
\end{enumerate}
If $\varphi$ is an isomorphism over $X$, then the pair $({\mathcal E}_H,\, \varphi)$ is
called a {\it holomorphic Cartan geometry}.
\end{definition}

The {\it branching locus} of a branched holomorphic Cartan geometry $({\mathcal E}_H,\, \varphi)$
is the subset of $X$ where $\varphi$ fails to be an isomorphism.

From Definition \ref{def1} it follows immediately that a branched holomorphic Cartan geometry
$({\mathcal E}_H,\, \varphi)$ produces the following commutative diagram of holomorphic
homomorphisms of vector bundles on $\mathcal X$
\begin{equation}\label{j3}
\begin{matrix}
0 &\longrightarrow & \text{ad}({\mathcal E}_H) &\longrightarrow & \text{At}({\mathcal E}_H)
& \stackrel{d'p}{\longrightarrow} & F^{1,0} &\longrightarrow & 0\\
&& \Vert && \,~\,\Big\downarrow\varphi && \,~\,\Big\downarrow\phi\\
0 &\longrightarrow & \text{ad}({\mathcal E}_H) &\longrightarrow & \text{ad}({\mathcal E}_G)
&\longrightarrow & \text{ad}({\mathcal E}_G)/\text{ad}({\mathcal E}_H)
&\longrightarrow & 0
\end{matrix}
\end{equation}
where $\phi$ is induced by $\varphi$. We note that $\phi$ is an isomorphism over a point
$x\, \in\, X$ if and only if $\varphi$ is an isomorphism over $x$. Therefore,
$\phi$ is an isomorphism over a nonempty subset of $X$. This nonempty subset is evidently
open and dense. If $({\mathcal E}_H,\, \varphi)$ is a holomorphic Cartan geometry, then $\phi$ is
an isomorphism over $X$.

\subsection{Standard examples}\label{se4.2}

Take a connected complex Lie group $G$ and a complex Lie subgroup
$H\, \subset\, G$. Assume that $G/H$ is equipped with a K\"ahler form $\omega$ satisfying
the following condition: there is a holomorphic line bundle $L$ over $G/H$ equipped with a
Hermitian structure $h_L$ such that the curvature of the Chern connection on $L$ for $h_L$ coincides
with $\sqrt{-1}\cdot\omega$. Consider the real hypersurface
$$
X_0\, :=\, \{v\, \in\, L\, \mid\, h_L(v)\,=\, 1\}\, \subset\, L
$$
in the total space of $L$. Let
\begin{equation}\label{q}
q\, :\, X_0\, \longrightarrow\, G/H
\end{equation}
be the natural projection; note that $q$ makes $X_0$ a principal $S^1$--bundle over $G/H$.
Then $\omega$ and the Chern connection for $h_L$ together produce a 
Sasakian structure on $X_0$. The Chern connection for $h_L$ decomposes the real tangent
bundle $T X_0$ of $X_0$ as
$$
T X_0\,=\, q^* T(G/H) \oplus ((G/H)\times {\mathbb R})\, ,
$$
where $T(G/H)$ is the real tangent bundle of $G/H$; more precisely, $q^* T(G/H)$ is the 
horizontal tangent space and $(G/H)\times {\mathbb R}$ is the vertical tangent space for the 
connection on the $S^1$--bundle $X_0$ given by the Chern connection on $L$ for $h_L$. The vector 
field on $X_0$ given by the action of $S^1$ on $X_0$ is the Reeb 
vector field $\xi$. So $X_0$ is a regular Sasakian manifold. We will denote by ${\mathcal X}_0$ 
this manifold $X_0$ equipped with the Sasakian structure.

The quotient map $G\, \longrightarrow\, G/H$ defines a holomorphic principal $H$--bundle over
$G/H$. This holomorphic principal $H$--bundle over $G/H$ will be denoted by $E^1_H$. Let
$$E^1_G\, := \,E^1_H\times^H G \, \longrightarrow\, G/H$$ be the holomorphic principal $G$--bundle
over $G/H$ obtained by extending the structure group of $E^1_H$ using the inclusion of $H$ in $G$. We note
that $E^1_G$ is identified with the trivial principal $G$--bundle $(G/H)\times G\,\longrightarrow\,
G/H$. Indeed, $E^1_G$ is the quotient of $G\times G$ where two elements $(g_1,\, g'_1)$ and
$(g_2,\, g'_2)$ are identified if there is an element $h\, \in\, H$ such that $g_2\,=\, g_1h$ and
$g'_2 \,=\, h^{-1}g'_1$. So the self-map of $G\times G$ defined by
$(g_1,\, g'_1)\, \longmapsto\, (g_1,\, g_1g'_1)$ identifies $E^1_G$ with 
the trivial principal $G$--bundle $(G/H)\times G$ over $G/H$.
Identify the holomorphic tangent bundle $TG$ with the trivial holomorphic
bundle $G\times {\mathfrak g}$, where $\mathfrak g$ is the Lie algebra of $G$, using
right-invariant vector fields on $G$. This trivialization produces an isomorphism
$$\mu\, :\, \text{At}(E^1_H)\, \longrightarrow\, \text{ad}(E^1_G)$$ using the
isomorphisms
$$
\text{At}(E^1_H)\,:=\, (TG)/H \, =\, (G\times {\mathfrak g})/H \,=\, (G/H)\times {\mathfrak g}
\,=\, \text{ad}(E^1_G)\, ;
$$
recall that $E^1_G$ is the trivial principal $G$--bundle $(G/H)\times G\, \longrightarrow\,G/H$, 
so $\text{ad}(E^1_G)$ is the trivial vector bundle $(G/H)\times {\mathfrak g}$ over $G/H$. Hence 
$E^1_H$ and the above isomorphism $\mu$ together produce a tautological holomorphic Cartan 
geometry on $G/H$ of type $(G,\, H)$ \cite{Sh}.

Consider the pull back $(q^*E^1_H,\, q^*\mu)$, where $q$ is the projection in \eqref{q}. This 
pair defines a holomorphic Cartan geometry of type $(G,\, H)$ on the Sasakian manifold 
${\mathcal X}_0$ constructed above.

Let ${\mathcal X}\, :=\, (X,\, g,\, \xi)$ be a Sasakian manifold, and let
\begin{equation}\label{f}
f\, :\, {\mathcal X}\, \longrightarrow\, G/H
\end{equation}
be a holomorphic map. We recall that the
holomorphicity of $f$ means that the differential
$$
df\, :\, TX\, \longrightarrow\, f^*T(G/H)
$$
(here $T(G/H)$ is the real tangent bundle of $G/H$) satisfies the following two conditions:
\begin{itemize}
\item $df (\xi)\, =\, 0$, and 

\item $df$ intertwines the automorphism $\Phi\vert_F$ in \eqref{e0} and the automorphism of
$f^*T(G/H)$ given by the almost complex structure on the complex manifold $G/H$.
\end{itemize}
Assume that the restriction $(df)\vert_F$ is an isomorphism over some point of $X$; this 
implies that $(df)\vert_F$ is an isomorphism over an open dense subset of $X$. Then 
$(f^*E^1_H,\, f^*\mu)$ is a holomorphic branched Cartan geometry on $\mathcal X$ of type $(G,\, 
H)$. The branching locus for this holomorphic branched Cartan geometry $(f^*E^1_H,\, f^*\mu)$ 
is the closed subset of $X$ where $(df)\vert_F$ fails to be an isomorphism.

\subsection{A connection defined by branched Cartan geometry}

Let ${\mathcal X}\, :=\, (X,\, g,\, \xi)$ be a connected Sasakian manifold. Take a pair
$({\mathcal E}_H,\, \varphi)$, where
${\mathcal E}_H\, :=\, (E_H,\, p,\, J_E,\, \widetilde{\xi},\, \widetilde{\mathcal F})$ is a
holomorphic principal $H$--bundle on $\mathcal X$, that defines a branched holomorphic
Cartan geometry on $\mathcal X$ of type $(G,\, H)$. As before, let ${\mathcal E}_G\, =\,
(E_G,\, p',\, J'_E,\, \widetilde{\xi}',\, \widetilde{\mathcal F}')$ be the holomorphic
principal $G$--bundle on $\mathcal X$ obtained by extending the structure group of ${\mathcal
E}_H$ using the inclusion of $H$ in $G$.

\begin{proposition}\label{prop1}
The above holomorphic principal $G$--bundle ${\mathcal E}_G$ has a natural holomorphic
connection given by $\varphi$.
\end{proposition}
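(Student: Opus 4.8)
The plan is to produce a holomorphic splitting of the Atiyah exact sequence \eqref{at2} for ${\mathcal E}_G$ directly out of $\varphi$, using the quotient description of the Atiyah bundle for an extension of structure group recorded earlier. Applying that description to the extension given by the inclusion $H\,\hookrightarrow\, G$ (so that the induced homomorphism $\text{ad}({\mathcal E}_H)\,\longrightarrow\,\text{ad}({\mathcal E}_G)$ is $\iota_1$), we may write
$$\text{At}({\mathcal E}_G)\,=\,\big(\text{At}({\mathcal E}_H)\oplus \text{ad}({\mathcal E}_G)\big)\big/\text{ad}({\mathcal E}_H)\, ,$$
where $\text{ad}({\mathcal E}_H)$ is embedded by $v\,\longmapsto\,(-\iota_2(v),\, \iota_1(v))$ and $[(a,\, b)]$ denotes the class of $(a,\, b)$. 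Let $\pi\,:\,\text{At}({\mathcal E}_G)\,\longrightarrow\, F^{1,0}$ be the projection of the Atiyah sequence \eqref{at2} for ${\mathcal E}_G$; under this description $\pi([(a,\, b)])\,=\,d'p(a)$, which is well defined because $d'p\circ\iota_2\,=\,0$.

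First I would form the homomorphism $\widetilde D\,:\,\text{At}({\mathcal E}_H)\,\longrightarrow\,\text{At}({\mathcal E}_G)$ sending $a$ to $[(a,\, -\varphi(a))]$, and show that it descends through $d'p$ to a map on $F^{1,0}$. The key point is that $\widetilde D$ annihilates $\iota_2(\text{ad}({\mathcal E}_H))$: for $v\,\in\,\text{ad}({\mathcal E}_H)$, condition (2) of Definition \ref{def1} gives $\varphi(\iota_2(v))\,=\,\iota_1(v)$, so $\widetilde D(\iota_2(v))\,=\,[(\iota_2(v),\, -\iota_1(v))]$, and $(\iota_2(v),\, -\iota_1(v))$ is precisely the image of $-v$ under the embedding $v\,\longmapsto\,(-\iota_2(v),\,\iota_1(v))$, hence its class vanishes in the quotient. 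Since the sequence \eqref{j2} identifies $\text{At}({\mathcal E}_H)/\iota_2(\text{ad}({\mathcal E}_H))$ with $F^{1,0}$ via $d'p$, the map $\widetilde D$ factors as $\widetilde D\,=\,D\circ d'p$ for a unique homomorphism $D\,:\,F^{1,0}\,\longrightarrow\,\text{At}({\mathcal E}_G)$.

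Second I would verify that $D$ splits the Atiyah sequence for ${\mathcal E}_G$. For $w\,\in\, F^{1,0}$ choose a lift $a\,\in\,\text{At}({\mathcal E}_H)$ with $d'p(a)\,=\,w$; then $\pi(D(w))\,=\,\pi([(a,\, -\varphi(a))])\,=\,d'p(a)\,=\,w$, so $\pi\circ D\,=\,\text{Id}_{F^{1,0}}$, which is exactly the defining condition of a holomorphic connection on ${\mathcal E}_G$. Holomorphicity of $D$ in the Sasakian sense of \eqref{Psi} (compatibility with the partial connections along ${\mathcal F}^{0,1}$) is inherited from $\varphi$, $\iota_1$, $\iota_2$ and the quotient map, each of which is a holomorphic homomorphism over $\mathcal X$, since a descent of holomorphic homomorphisms is again holomorphic. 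Thus $D$ is the asserted natural holomorphic connection.

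The only delicate step is the vanishing of $\widetilde D$ on $\iota_2(\text{ad}({\mathcal E}_H))$, which is where compatibility condition (2) enters and where the sign in the embedding $v\,\longmapsto\,(-\iota_2(v),\,\iota_1(v))$ is essential; everything else is a diagram chase. I would finally stress that this construction uses condition (2) only, and never the hypothesis that $\varphi$ is an isomorphism over a dense open subset, so $D$ is defined and holomorphic on all of $X$, including the branching locus. This is what makes the connection genuinely global, and it is precisely the content of the proposition.
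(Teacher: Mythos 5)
Your proof is correct and follows essentially the same route as the paper: both rest on the quotient description $\text{At}({\mathcal E}_G)\,=\,(\text{At}({\mathcal E}_H)\oplus \text{ad}({\mathcal E}_G))/\text{ad}({\mathcal E}_H)$ and invoke condition (2) of Definition \ref{def1} at exactly the same point to kill the $\text{ad}({\mathcal E}_H)$ ambiguity. The only difference is one of duality: you construct the section $D\,:\,F^{1,0}\,\longrightarrow\,\text{At}({\mathcal E}_G)$ directly (which is literally what the paper's definition of a holomorphic connection asks for), whereas the paper constructs the retraction $\beta'\,:\,\text{At}({\mathcal E}_G)\,\longrightarrow\,\text{ad}({\mathcal E}_G)$ and uses the equivalence of left and right splittings of a short exact sequence.
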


\begin{proof}
We will first describe $\text{At}({\mathcal E}_G)$. Consider the homomorphism
$$
\iota_3\, :\, \text{ad}({\mathcal E}_H) \,\longrightarrow\,
\text{ad}({\mathcal E}_G)\oplus \text{At}({\mathcal E}_H)\, ,\ \ v\, \longmapsto\,
(\iota_1(v),\, -\iota_2(v))\, ,
$$
where $\iota_1$ and $\iota_2$ are the homomorphisms in \eqref{j1} and \eqref{j2} respectively.
Then we have
$$
(\text{ad}({\mathcal E}_G)\oplus \text{At}({\mathcal E}_H))/\iota_3(\text{ad}({\mathcal E}_H))
\,=\, \text{At}({\mathcal E}_G)\, .
$$
Indeed, this follows immediately from the construction of the Atiyah bundle (see
\eqref{at}). Let
$$
\alpha\, :\, \text{ad}({\mathcal E}_G)\oplus \text{At}({\mathcal E}_H)\,\longrightarrow\,
(\text{ad}({\mathcal E}_G)\oplus \text{At}({\mathcal E}_H))/\iota_3(\text{ad}({\mathcal E}_H))
\,=\, \text{At}({\mathcal E}_G)
$$
be the quotient map. Let
\begin{equation}\label{ip}
0\, \longrightarrow\, \text{ad}({\mathcal E}_G) \,\stackrel{\iota'}{\longrightarrow}\,
\text{At}({\mathcal E}_G) \,\stackrel{d'p'}{\longrightarrow} \, F^{1,0} \, \longrightarrow\, 0
\end{equation}
be the Atiyah exact sequence for ${\mathcal E}_G$ (see \eqref{at2}). The injective
homomorphism $\iota'$ in \eqref{ip} coincides $\alpha\circ j_1$, where $j_1$ is the inclusion of
$\text{ad}({\mathcal E}_G)$ in $\text{ad}({\mathcal E}_G)\oplus \text{At}({\mathcal E}_H)$ and
$\alpha$ is the above quotient map. Now consider the homomorphism
$$
\beta\, :\, \text{ad}({\mathcal E}_G)\oplus \text{At}({\mathcal E}_H)\,\longrightarrow\,
\text{ad}({\mathcal E}_G)\, , \ \ (v,\, w) \, \longmapsto\, v+\varphi(w)\, ,
$$
where $\varphi$ is the homomorphism in the statement of the proposition.
Since $\beta\circ\iota_3\, =\, 0$, it follows that $\beta$ descends to a homomorphism
$$
\beta'\, :\, \text{At}({\mathcal E}_G) \, \longrightarrow\, \text{ad}({\mathcal E}_G)\, .
$$
Now it is straightforward to check that $\beta'\circ\iota' \,=\, \text{Id}_{\text{ad}({\mathcal
E}_G)}$, where $\iota'$ is the homomorphism in \eqref{ip}. Consequently, $\beta'$ produces a
holomorphic splitting of the short exact sequence in \eqref{ip}. Hence $\beta'$ gives
a holomorphic connection on ${\mathcal E}_G$.
\end{proof}

\begin{definition}\label{def2}
A branched holomorphic Cartan geometry $({\mathcal E}_H,\, \varphi)$ of type $(G,\, H)$
on $\mathcal X$ is called {\it flat} if the holomorphic connection on
${\mathcal E}_G$ in Proposition \ref{prop1} is flat (see Definition \ref{def0}).
\end{definition}

The branched holomorphic Cartan geometries in Section \ref{se4.2} are flat.

\subsection{Developing map for flat Cartan geometries}\label{se-dm}

Let $({\mathcal E}_H,\, \varphi)$ be a flat branched holomorphic Cartan geometry of type $(G,\, H)$
on $\mathcal X$. Consider the holomorphic connection on ${\mathcal E}_G$ in Proposition \ref{prop1}.
Let $\widehat D$ be the flat connection on $E_G$ given by it in Lemma \ref{lem1}.

Now assume that the manifold $X$ is simply connected. Fix a point $x_0\, \in\, X$. Using the 
flat connection $\widehat D$ on $E_G$, the principal $G$--bundle $E_G$ gets identified with the 
trivial principal $G$--bundle $X\times (E_G)_{x_0}$, where $(E_G)_{x_0}$ is the fiber of $E_G$ 
over the base point $x_0$. This identification between $E_G$ and the trivial principal 
$G$--bundle $X\times (E_G)_{x_0}$ is clearly holomorphic. So we have
$$
E_H\, \subset\, E_G\,=\, E_G\times (E_G)_{x_0}\, .
$$
Let
\begin{equation}\label{ga}
\gamma\, :\, X\, \longrightarrow\, (E_G)_{x_0}/H
\end{equation}
be the map that sends any $x\, \in\, X$ to the $H$--orbit $(E_H)_x\, \subset\, (E_G)_{x_0}$, 
where $(E_H)_x$ is the fiber of $E_H$ over the point $x$. Since the identification between $E_G$ 
and the trivial principal $G$--bundle $X\times (E_G)_{x_0}$ is holomorphic with respect to the 
holomorphic structure on $E_G$ given by the holomorphic structure on $E_H$, it follows 
immediately that the map $\gamma$ in \eqref{ga} is holomorphic. In particular, $\gamma$ is
constant on the orbits of the flow on $X$ given by the Reeb vector field $\xi$.

Fixing a point $y_0\, \in\, (E_G)_{x_0}$, we may identify $G$ with $(E_G)_{x_0}$ by the
map $g\, \longmapsto\, y_0g$. In that case, $\gamma$ is a holomorphic map from
$X$ to $G/H$. If we set $f$ in \eqref{f} to be the map $\gamma$ in \eqref{ga}, then
the pulled back holomorphic branched Cartan geometry $(f^*E^1_H,\, f^*\mu)$ in
Section \ref{se4.2} is identified with the holomorphic branched Cartan geometry
$({\mathcal E}_H,\, \varphi)$ that we started with.

The map $\gamma$ in \eqref{ga} will be called the \textit{developing map}
for $({\mathcal E}_H,\, \varphi)$.

\begin{lemma}\label{lem2}
Let ${\mathcal X}\, :=\, (X,\, g,\, \xi)$ be a connected Sasakian manifold such that
$X$ is compact and simply connected. Then there is no flat branched holomorphic
Cartan geometry on $\mathcal X$ of type $(G,\, H)$ if $G/H$ is noncompact.
\end{lemma}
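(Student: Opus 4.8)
The plan is to use the developing map $\gamma\,:\,X\,\longrightarrow\, G/H$ of \eqref{ga}, which exists precisely because $X$ is simply connected, and to prove that it must be surjective; since $X$ is compact this forces $G/H$ to be compact, contradicting the hypothesis. First I would record that $\gamma$ is holomorphic in the sense of \eqref{f}, so $d\gamma(\xi)=0$ and $(d\gamma)\vert_F$ intertwines $\Phi\vert_F$ (see \eqref{e0}) with the complex structure of $G/H$; moreover, comparing ranks in the isomorphism $\varphi$ (the ranks of $\text{At}({\mathcal E}_H)$ and $\text{ad}({\mathcal E}_G)$ must agree wherever $\varphi$ is an isomorphism) gives $\dim_{\mathbb C}(G/H)=m$, so $(d\gamma)\vert_F$ is a homomorphism between bundles of the same rank. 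Because $X$ is compact, the image $\gamma(X)\subset G/H$ is compact, hence closed; the entire problem is therefore to prove that $\gamma(X)$ is also open.

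Next I would stratify $X$ by the branching locus $B\subset X$, the closed set where $(d\gamma)\vert_F$ fails to be an isomorphism. On the open dense complement $X\setminus B$ the map $\gamma$ is a transverse submersion: since $d\gamma(\xi)=0$ and $(d\gamma)\vert_F$ is an isomorphism onto $T(G/H)$, in suitable local coordinates $\gamma$ is the projection ${\mathbb R}\times{\mathbb C}^m\,\longrightarrow\,{\mathbb C}^m$, which is an open mapping. Consequently $\gamma(X)$ has nonempty interior, and, more importantly, every boundary point of $\gamma(X)$ is the image of a point of $B$; that is, the topological boundary satisfies $\partial\,\gamma(X)\,\subset\,\gamma(B)$.

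It then remains to show that $\gamma(B)$ is too small to separate $G/H$. At each point of $B$ the homomorphism $(d\gamma)\vert_F$ is ${\mathbb C}$--linear and not surjective, so its image is a complex subspace of $T(G/H)$ of complex dimension at most $m-1$; together with $d\gamma(\xi)=0$ this gives $\mathrm{rank}_{\mathbb R}\,d\gamma\,\le\,2m-2$ at every point of $B$. By the standard bound on the size of the image of the bounded-rank locus of a smooth map, $\gamma(B)$ has Hausdorff dimension at most $2m-2\,=\,\dim_{\mathbb R}(G/H)-2$, and it is compact, hence a closed subset of codimension at least two. Such a subset does not disconnect the connected manifold $G/H$, so $(G/H)\setminus\gamma(B)$ is connected; since $\gamma(X)\setminus\gamma(B)$ is closed in this complement (as $\gamma(X)$ is closed), open in it (because $\partial\,\gamma(X)\subset\gamma(B)$), and nonempty, it is all of $(G/H)\setminus\gamma(B)$. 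Taking closures yields $\gamma(X)=G/H$, whence $G/H$ is compact, which is the desired contradiction.

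I expect the main obstacle to be the control of the branching locus, and specifically the two ingredients of the last paragraph: the rank estimate giving $\dim\gamma(B)\le\dim_{\mathbb R}(G/H)-2$, and the topological fact that such a thin closed set cannot separate $G/H$. The role of the hypotheses is then transparent: simple connectivity of $X$ is exactly what produces the globally defined developing map $\gamma$, while compactness of $X$ is used both to make $\gamma(X)$ closed and to make the branch image $\gamma(B)$ compact.
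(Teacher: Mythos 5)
Your proposal is correct and follows the same route as the paper: both reduce the statement to the impossibility of the developing map $\gamma\,:\,X\,\longrightarrow\,G/H$ of \eqref{ga}, which cannot exist because compactness of $X$ would force $\gamma(X)\,=\,G/H$. The paper merely asserts in one sentence that no such generically submersive holomorphic map exists, whereas your treatment of the branch locus --- the rank bound giving $\dim_{H}\gamma(B)\,\le\,\dim_{\mathbb R}(G/H)-2$ (Federer's refinement of Sard for smooth maps) combined with the fact that a closed set of codimension at least two cannot disconnect the connected manifold $G/H$ --- supplies exactly the justification the paper omits, and it is genuinely needed since a branched $\gamma$ need not be an open map.
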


\begin{proof}
Since $X$ is compact and connected while $G/H$ is noncompact, there is no
holomorphic map from $\mathcal X$ to $G/H$ satisfying the condition that
its restriction to some nonempty open subset of $X$ is a submersion. Hence there is
no developing map $\gamma$ as in \eqref{ga}.
\end{proof}

\section{Quasi-regular Sasakians that are Calabi-Yau}

Let ${\mathcal X}\, :=\, (X,\, g,\, \xi)$ be a connected compact quasi-regular Sasakian manifold
of dimension $2m+1$ satisfying the following condition: the holomorphic line bundle $\bigwedge^m 
F^{1,0}$ on $\mathcal X$ (see \eqref{e2}) admits a holomorphic connection.

Since $\mathcal X$ is compact and quasi-regular, the space of orbits for the Reeb flow $\xi$ has 
the structure of a smooth compact K\"ahler orbifold of complex dimension $m$ \cite{BG2}. We will 
denote by $\mathbb X$ this compact K\"ahler orbifold of complex dimension $m$. The above
condition that the holomorphic line bundle $\bigwedge^m
F^{1,0}$ on $\mathcal X$ in \eqref{e2} admits a holomorphic connection is equivalent to the condition
that $c_1(\mathbb X)\,=\, 0$. This condition implies that $\mathbb X$ admits a Ricci flat
K\"ahler metric; this was conjectured by Calabi and it was proved by Yau in \cite{Ya}, and
for orbifolds it was proved in \cite{Ca}.

Giving a branched holomorphic Cartan geometry on ${\mathcal X}$ of type $(G,\, H)$ is equivalent
to giving a branched holomorphic Cartan geometry of type $(G,\, H)$ on the orbifold
$\mathbb X$. See \cite{BD} for branched holomorphic Cartan geometries on a complex manifold;
the definition in \cite{BD} extends to smooth orbifolds in a straightforward way.

\begin{theorem}\label{thm1}
Assume that the orbifold fundamental group of $\mathbb X$ is trivial.
Let $E$ be a holomorphic vector bundle over the orbifold $\mathbb X$ admitting a holomorphic
connection $D$. Then
\begin{enumerate}
\item the holomorphic vector bundle $E$ is holomorphically trivial, and

\item $D$ is the trivial connection on the trivial holomorphic vector bundle $E$.
\end{enumerate}
\end{theorem}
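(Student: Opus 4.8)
The plan is to exploit the fact that a holomorphic vector bundle admitting a holomorphic connection has vanishing Chern classes (in particular vanishing degree and second Chern class), and then to deduce semistability and flatness, so that the bundle corresponds to a representation of the orbifold fundamental group. Since that group is assumed trivial, the representation must be trivial, forcing $E$ to be holomorphically trivial. In more detail, the first step is to recall that the existence of the holomorphic connection $D$ means the Atiyah class of $E$ vanishes, and hence all rational Chern classes of $E$ vanish; in particular $\deg E \,=\, 0$ and $\mathrm{ch}_2(E)\,=\, 0$ with respect to the Ricci-flat K\"ahler orbifold structure on $\mathbb X$.

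Next I would invoke the Calabi--Yau hypothesis. Since $c_1(\mathbb X)\,=\,0$, the orbifold $\mathbb X$ carries a Ricci-flat K\"ahler metric (by Yau's theorem and its orbifold version cited in the text). With respect to this metric, a holomorphic bundle whose degree and discriminant both vanish and which moreover admits a \emph{holomorphic} connection is flat: the point is that a bundle with a holomorphic connection is in particular equipped with a flat structure after one checks the curvature, but more robustly one argues that such $E$ is polystable with vanishing $c_1$ and $c_2$, so by the orbifold version of the Donaldson--Uhlenbeck--Yau / Bogomolov type correspondence it admits a projectively (indeed genuinely) flat unitary connection, and thus arises from a representation $\pi_1^{\mathrm{orb}}(\mathbb X)\,\longrightarrow\, \mathrm{GL}(r,\mathbb C)$. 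The cleanest route, however, is to use that the flat connection associated to $D$ (via the Atiyah-sequence splitting) already gives a representation of $\pi_1^{\mathrm{orb}}(\mathbb X)$ directly, once one knows the holomorphic connection is flat; and the Calabi--Yau/compactness input is exactly what guarantees that a holomorphic connection on a compact K\"ahler orbifold is automatically flat.

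The third step is then purely group-theoretic and topological: the monodromy representation $\rho\,:\,\pi_1^{\mathrm{orb}}(\mathbb X)\,\longrightarrow\,\mathrm{GL}(r,\mathbb C)$ associated to the flat connection recovers $(E,\,D)$ as the associated flat bundle $\widetilde{\mathbb X}\times^{\rho}\mathbb C^r$. Because the orbifold fundamental group is trivial by hypothesis, $\rho$ is trivial, the universal orbifold cover is $\mathbb X$ itself, and the associated bundle is the trivial bundle $\mathbb X\times\mathbb C^r$ equipped with the trivial (product) connection. This establishes both conclusions (1) and (2) simultaneously.

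The main obstacle I anticipate is the \emph{flatness} step, that is, showing that on a compact Calabi--Yau K\"ahler orbifold any holomorphic connection is automatically flat. Over a compact K\"ahler manifold this is a theorem of Atiyah--Bott--type reasoning combined with Chern--Weil: the trace of a power of the curvature represents a Chern class, which vanishes because $D$ is holomorphic, and Ricci-flatness together with a Bochner/Weitzenb\"ock argument upgrades this to honest vanishing of the curvature form. Transporting this argument to the orbifold setting requires care with the orbifold Chern--Weil theory and with the orbifold Bochner technique, but the cited Ricci-flat structure and the standard dictionary between compact K\"ahler orbifolds and their uniformizing charts should make each piece go through. Once flatness is in hand, the remaining steps are essentially formal.
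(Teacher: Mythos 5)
Your overall strategy --- vanishing Chern classes, reduction to a representation of $\pi_1^{\mathrm{orb}}(\mathbb X)$, and triviality of that group --- has the right shape, and it is broadly the shape of the argument the paper invokes by citing \cite[Theorem 6.2]{BD} together with the orbifold versions of Simpson's results (\cite{Si1}, \cite{Si2}, \cite{BM}). But the step you yourself flag as the main obstacle, namely that on a compact Calabi--Yau K\"ahler orbifold every holomorphic connection is automatically flat, proved by ``Chern--Weil plus Bochner,'' does not work as described, and it is exactly where the real content lies. Chern--Weil theory applied to the curvature $R$ of the holomorphic connection $D$ only tells you that the invariant polynomials $\mathrm{tr}(R^k)$ are holomorphic $2k$-forms representing $\mathrm{ch}_k(E)\,=\,0$, hence vanish; it does not force $R$ itself --- a holomorphic section of $\Omega^2_{\mathbb X}\otimes \mathrm{End}(E)$ which may be nilpotent-valued --- to vanish, and there is no Bochner identity on the base that converts Ricci-flatness into $R\,=\,0$. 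Whether a holomorphic connection on a compact K\"ahler manifold must be flat is precisely the delicate point; in the argument of \cite{BD} flatness of $D$ is a \emph{consequence} of first proving $E$ trivial, not an input to it.

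The route actually used is: (a) vanishing of the Atiyah class gives vanishing rational Chern classes; (b) $E$ is \emph{semistable} --- this is not automatic and is where the Calabi--Yau hypothesis genuinely enters, via semistability of $T\mathbb X$ with respect to the Ricci-flat metric and a second-fundamental-form argument applied to a would-be destabilizing subsheaf; (c) a semistable bundle with vanishing Chern classes on a compact K\"ahler orbifold is, by Simpson's correspondence in its orbifold/Sasakian form, a successive extension of stable flat bundles, each trivial because $\pi_1^{\mathrm{orb}}(\mathbb X)$ is trivial, and the extensions are killed by $H^1(\mathbb X,\mathcal O_{\mathbb X})\,=\,0$; (d) finally, on the trivial bundle one writes $D\,=\,d+A$ with $A\,\in\, H^0(\mathbb X,\Omega^1_{\mathbb X}\otimes\mathfrak{gl}(r,\mathbb C))\,=\,0$, which gives part (2). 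Your proposal asserts polystability of $E$ without justification (this is step (b), a theorem rather than a formality), and it conflates the unitary flat connection produced by a Donaldson--Uhlenbeck--Yau type statement with the given holomorphic connection $D$, so part (2) is not actually addressed except through the circular assumption that $D$ is already flat. As written the argument has a real gap at the flatness/semistability step.
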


\begin{proof}
This was proved in \cite{BD} for compact simply connected K\"ahler manifolds $M$ with $c_1(M) 
\,=\, 0$ (see \cite[Theorem 6.2]{BD}). The proof given in \cite{BD} extends to the case of 
smooth compact orbifolds with vanishing $c_1$ once some straightforward modifications are 
incorporated. The main point to note is that the results from \cite{Si1} used in \cite[Theorem 
6.2]{BD} remain valid compact quasi-regular Sasakian manifolds \cite{BM}.
(See also \cite{Si2}.)
\end{proof}

\begin{proposition}\label{prop2}
Assume that the orbifold fundamental group of $\mathbb X$ is trivial. Let 
$({\mathcal E}_H,\, \varphi)$ be a branched holomorphic Cartan geometry of type $(G,\, H)$, with $G$ a complex affine Lie group,
on $\mathcal X$. Then there is a holomorphic map $\gamma\, :\, {\mathcal X}\,\longrightarrow\,
G/H$ such that $({\mathcal E}_H,\, \varphi)$ is the pullback, by $\gamma$, of the standard
Cartan geometry on $G/H$ of type $(G,\, H)$. Also, $G/H$ is compact.
\end{proposition}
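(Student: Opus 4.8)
The plan is to first upgrade the holomorphic connection produced by Proposition \ref{prop1} into a \emph{flat} connection, so that the developing-map construction of Section \ref{se-dm} applies, and then to extract both the map $\gamma$ and the compactness of $G/H$ from that map. Throughout I would work on the compact K\"ahler orbifold $\mathbb{X}$, using the stated equivalence between branched holomorphic Cartan geometries on $\mathcal{X}$ and on $\mathbb{X}$, so that the hypothesis that the orbifold fundamental group $\pi_1^{\mathrm{orb}}(\mathbb{X})$ is trivial and the condition $c_1(\mathbb{X})=0$ are directly usable. The affineness of $G$ enters decisively here: fix a closed embedding $G\,\hookrightarrow\,\mathrm{GL}(n,\mathbb{C})$, inducing a fibrewise injective map of Lie algebras $\mathfrak{g}\,\hookrightarrow\,\mathfrak{gl}(n,\mathbb{C})$.

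By Proposition \ref{prop1} the bundle $\mathcal{E}_G$ carries a holomorphic connection $\beta'$. Extending the structure group to $\mathrm{GL}(n,\mathbb{C})$ and passing to the associated rank-$n$ vector bundle $E$ over $\mathbb{X}$, the connection $\beta'$ induces a holomorphic connection $D$ on $E$, exactly as at the end of Section~3. Since $c_1(\mathbb{X})=0$ and $\pi_1^{\mathrm{orb}}(\mathbb{X})$ is trivial, Theorem \ref{thm1} applies and gives that $E$ is holomorphically trivial and $D$ is the trivial connection, in particular flat. The curvature of $D$ is the image of the curvature of $\beta'$ (equivalently of the $C^\infty$ connection $\widehat D$ of Lemma \ref{lem1}) under the bundle homomorphism $\text{ad}(\mathcal{E}_G)\,\longrightarrow\,\text{End}(E)$ induced by $\mathfrak{g}\,\hookrightarrow\,\mathfrak{gl}(n,\mathbb{C})$; as this homomorphism is fibrewise injective, vanishing of the curvature of $D$ forces the curvature of $\beta'$ to vanish. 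Hence $(\mathcal{E}_H,\,\varphi)$ is flat in the sense of Definition \ref{def2}.

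Next I would run the developing-map construction of Section \ref{se-dm} in its orbifold form. Triviality of $\pi_1^{\mathrm{orb}}(\mathbb{X})$ means the flat connection $\widehat D$ has trivial holonomy, so $\mathcal{E}_G$ is holomorphically isomorphic to the trivial principal $G$--bundle $\mathbb{X}\times (E_G)_{x_0}$, its flat sections being the constant ones. As in \eqref{ga}, sending $x$ to the $H$--orbit $(E_H)_x\,\subset\,(E_G)_{x_0}\,\cong\, G$ then defines a holomorphic map $\gamma\,:\,\mathbb{X}\,\longrightarrow\, G/H$, which after composition with the orbit map yields a holomorphic $\gamma\,:\,\mathcal{X}\,\longrightarrow\, G/H$ of the type in \eqref{f}. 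By the identification recorded at the end of Section \ref{se-dm}, taking $f=\gamma$ exhibits $(\mathcal{E}_H,\,\varphi)$ as the pullback $(\gamma^*E^1_H,\,\gamma^*\mu)$ of the standard Cartan geometry on $G/H$, which is the first assertion. For compactness, note that since $\varphi$ is an isomorphism over a dense open set, the condition on $(d\gamma)\vert_F$ from Section \ref{se4.2} shows $\gamma$ is a local biholomorphism there; as $\dim_{\mathbb C}\mathbb{X}=m=\dim_{\mathbb C}G/H$ and $\mathbb{X}$ is compact, $\gamma$ is proper and $\gamma(\mathbb{X})$ is a closed analytic subset of pure dimension $m$ in the connected manifold $G/H$, hence equal to all of $G/H$. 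Thus $G/H=\gamma(\mathbb{X})$ is compact. This is precisely the contrapositive of Lemma \ref{lem2}, transported to the orbifold $\mathbb{X}$.

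The main obstacle is the flatness step. This is where affineness of $G$ is genuinely indispensable, because Theorem \ref{thm1} is a statement about vector bundles and can only be invoked after a faithful linear representation of $G$ has been fixed; one must also be careful that Theorem \ref{thm1} is available on the orbifold $\mathbb{X}$ (as asserted in its proof, via \cite{BM}) and that flatness of $D$ really descends to flatness of $\beta'$ through the injectivity of $\mathfrak{g}\,\hookrightarrow\,\mathfrak{gl}(n,\mathbb{C})$. Once flatness is secured, the developing map and the Remmert-type image argument for compactness are essentially bookkeeping built on Section \ref{se-dm} and the proof of Lemma \ref{lem2}.
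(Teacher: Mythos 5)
Your proof follows essentially the same route as the paper's: Proposition \ref{prop1} supplies the holomorphic connection on ${\mathcal E}_G$, affineness of $G$ together with Theorem \ref{thm1} and the injectivity of the induced Lie algebra map forces flatness, and the developing map of Section \ref{se-dm} combined with the argument of Lemma \ref{lem2} gives both the pullback description and the compactness of $G/H$. The only cosmetic difference is that you posit a closed embedding $G\hookrightarrow \mathrm{GL}(n,\mathbb{C})$ whereas the paper's definition of a complex affine Lie group only provides a holomorphic homomorphism with discrete kernel; since your argument uses nothing beyond the resulting injectivity of $\mathfrak{g}\longrightarrow\mathfrak{gl}(n,\mathbb{C})$, which the weaker hypothesis already supplies, this does not affect correctness.
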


Recall that $G$ is a complex affine Lie group if there exist a positive integer $r$ and 
holomorphic homomorphism of complex Lie groups $\rho \,:\, G \,\longrightarrow\, {\rm GL}(r, 
{\mathbb C})$, with discrete kernel. The corresponding Lie algebra representation $\rho'$ is an 
injective Lie algebra homomorphism from $\mathfrak g$ to $\mathfrak{ gl(r, {\mathbb C}})$. 
Notice that complex simply connected Lie groups and complex semi-simple Lie groups are complex 
affine. Indeed, for $G$ complex simply connected, holomorphic representations with discrete 
kernel do exist by Ado's theorem. For $G$ complex semi-simple, holomorphic representations with 
discrete kernel are also known to exist (see Theorem 3.2, chapter XVII in \cite{Hoc}).

\begin{proof}
By Proposition \ref{prop1} the holomorphic principal $G$--bundle ${\mathcal E}_G$ constructed by 
extension of the structure group has a natural holomorphic connection given by $\varphi$.

Let us consider a holomorphic homomorphism of complex Lie groups $\rho \,:\, G 
\,\longrightarrow\, {\rm GL}(r, {\mathbb C})$, with discrete kernel. Then the associated 
holomorphic vector bundle of rank $r$ inherits a holomorphic connection, which must be flat by 
Theorem \ref{thm1}. Since the Lie algebra homomorphism $\rho' \,:\, \mathfrak g 
\,\longrightarrow\, \mathfrak{ gl(r, {\mathbb C}})$ is injective, the curvature of the 
holomorphic connection of the holomorphic principal bundle ${\mathcal E}_G$ also vanishes.

It follows that the branched holomorphic Cartan geometry
$({\mathcal E}_H,\, \varphi)$ is flat (see Definition \ref{def2}). 

Consider the developing map
$$
\gamma\, :\, {\mathcal X}\,\longrightarrow\, G/H
$$
constructed in \eqref{ga}. As observed in Section \ref{se-dm}, the branched holomorphic Cartan 
geometry $({\mathcal E}_H,\, \varphi)$ on $\mathcal X$ is the pullback of the standard Cartan geometry on $G/H$ 
of type $(G,\, H)$ by the map $\gamma$.

{}From Lemma \ref{lem2} we know that $G/H$ is compact.
\end{proof}


\end{document}